\documentclass[a4paper,11pt,reqno]{article}

\usepackage{amsmath}
\usepackage{amssymb}
\usepackage{amsopn}
\usepackage{amsthm}
\usepackage{amstext}
\newtheorem{proposition}{Proposition}[section]
\newtheorem{theorem}[proposition]{Theorem}
\newtheorem{lemma}[proposition]{Lemma}
\newtheorem{corollary}[proposition]{Corollary}
\newtheorem{definition}[proposition]{Definition}
\newtheorem{remark}[proposition]{Remark}

\textwidth 140mm \textheight 240mm \hoffset -10mm \voffset -12mm

\begin{document}
\date{}
\title{The characterizations of the stable perturbation of a closed operator by a linear operator in Banach spaces}
\author{Fapeng Du\thanks{E-mail: jsdfp@163.com}\\
   School of Mathematical \& Physical Sciences, Xuzhou Institute of Technology\\
  Xuzhou 221008, Jiangsu Province, P.R. China\\
 \and
 Yifeng Xue\thanks{Corresponding author, E-mail: yfxue@math.ecnu.edu.cn} \\
 Department of mathematics, East China Normal University\\
  Shanghai 200241, P.R. China
}

\maketitle

\begin{abstract}
In this paper, we investigate the invertibility of $I_Y+\delta TT^+$
when $T$ is a closed operator from $X$ to $Y$ with a generalized
inverse $T^+$ and $\delta T$ is a linear operator whose domain
contains $D(T)$ and range is contained in $D(T^+)$. The
characterizations of the stable perturbation $T+\delta T$ of $T$ by
$\delta T$ in Banach spaces are obtained. The results extend the recent
main results of Huang's in Linear Algebra and its Applications.

\vspace{3mm}
\noindent{2000 {\it Mathematics Subject Classification\/}: 15A09, 47A55}\\
 \noindent{\it Key words}:  closed operator, generalized inverse, stable perturbation

\end{abstract}

\setlength{\baselineskip}{15pt}

\section{Introduction}

The expression and perturbation analysis of the generalized inverse
(resp. the Moore--Penrose) inverse of bounded linear operators on
Banach spaces (resp. Hilbert spaces) have been widely studied since
Nashed's book \cite{Na} was published in 1976. Ten years ago, Chen
and Xue proposed a notation so--called the stable perturbation of a
bounded operator instead of the rank--preserving perturbation of a
matrix in \cite{CX1}. Using this new notation, they established the
perturbation analyses for the Moore--Penrose inverse and the least
square problem on Hilbert spaces in \cite{CWX}, \cite{CX2},
\cite{DJ} and \cite{XC}. In recent years, the perturbation analysis
of generalized inverses of closed operators has been appeared in
\cite{QHL}, \cite{QH} and \cite{WZ} with small perturbation
operators bounded related to closed operators. The results in these
papers generalize corresponding results in \cite{CX1}.

Throughout the paper, $X$ and $Y$ are always Banach spaces. Let
$B(X,Y)$, $D(X,Y)$ and $C(X,Y)$ denote the set of bounded linear
operators, densely--defined linear operators from $X$ to $Y$ and
closed densely--defined linear operators from $X$ to $Y$,
respectively. For $T \in D(X,Y)$, let $R(T)$ (resp. $N(T)$) denote
the range (resp. null space) of $T$. Suppose that $T\in C(X,Y)$ has
a generalized inverse $T^+$. Let $\delta T\colon D(\delta
T)\rightarrow Y$ be a closed operator with $D(T)\subset D(\delta T)$
and $R(\delta T)\subset D(T^+)$. Put $\bar T=T+\delta T$. In this
paper,  we first characterize when $I_Y+\delta TT^+ \colon
D(T^+)\rightarrow D(T^+)$ is bijective and then give some equivalent
conditions that make $R(\bar T)\cap N(T^+)=\{0\}$ under the
assumption that $I_Y+\delta TT^+\colon D(T^+)\rightarrow D(T^+)$ is
bijective. These results generalize several main results in
\cite{QHL,QH}.

\section{Some Lemmas}

Let $V$ be a closed subspace of $X$. Recall that $V$ is complemented
in $X$ if there is a closed subspace $U$ in $X$ such such $V\cap
U=\{0\}$ and $X=V+U$. In this case, we set $X= V\dotplus U$ and
$U=V^c$.

Let $T\in B(X,Y)$. If there is $S\in B(Y,X)$ such that $TST=T$ and
$STS=S$, then we say $T$ has a generalized inverse $S$, denoted by
$T^+$. It is well--known that $T\in B(X,Y)$ has a $T^+\in B(Y,X)$
iff $R(T)$ is closed and
$$
X=N(T)\dotplus N(T)^c,\quad Y=R(T)\dotplus R(T)^c
$$
(cf. \cite{CYX}). In general, we have
\begin{definition}
Let $T\in C(X,Y)$. If there is $S\in D(Y,X)$ with $D(S)\supset R(T)$
and $R(S)\subset D(T)$ such that
\begin{equation}\label{2eqa}
TST=T\ \text{on}\ D(T),\quad STS=S\ \text{on}\ D(S),
\end{equation}
then $S$ is called a generalized inverse of $T$, denoted by $T^+$.
\end{definition}

From (\ref{2eqa}), we get that $P=I_X-ST$ (resp. $Q=TS$) is an
idempotent operator on $D(T)$ (resp. $D(S)$) with $R(P)=N(T)$ (resp.
$R(Q)=R(T)$). Let $T\in C(X,Y)$. It is known that for $T\in C(X,Y)$,
we can always find a $T^+\in D(Y,X)$ (cf. \cite{Na}) and we call
$T^+$ is an algebraic generalized inverse of $T$. But when $T^+$
becomes a closed operator is a problem. The following proposition
(cf. \cite{Na}) gives an answer.

\begin{proposition}\label{2P1}
Let $T\in C(X,Y)$. Assume that
$Y=\overline{R(T)}\dotplus(\overline{R(T)})^c$. Let $Q\colon
Y\rightarrow\overline{R(T)}$ be the bounded idempotent operator on
$Y$.
\begin{enumerate}
\item[$(1)$] If there is a closed subspace $M$ of $X$ such that $M\cap N(T)=\{0\}$ and $D(T)=N(T)+M\cap D(T)$,
then $T^+\in C(Y,X)$ with $D(T^+)=R(T)+(\overline{R(T)})^c$,
$R(T^+)=D(T)\cap M$ and $TT^+y=Qy$, $\forall\,y\in D(T^+)$.
\item[$(2)$] If $X=N(T)\dotplus N(T)^c$, then there exists a unique $S\in C(Y,X)$ with $D(S)=R(T)+(\overline{R(T)})^c$,
$N(S)=(\overline{R(T)})^c$ and $R(S)=D(T)\cap N(T)^c$ such that
\begin{alignat}{3}
\label{2eqb} TST&=T\ \text{on}\ D(T)\,\ \text{and}&\,\ STS&=S\ \text{on}\ D(S)\\
\label{2eqc} TS&=Q\ \text{on}\ D(S)\,\ \text{and}&\,\ ST&=I_X-P\
\text{on}\ D(T),
\end{alignat}
where $P$ is the idempotent operator of $X$ onto $N(T)$.

In addition, $S$ is bounded if $R(T)$ is closed.
\end{enumerate}
\end{proposition}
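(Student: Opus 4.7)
The plan is to treat Part~(1) first and then deduce Part~(2) as a consequence. For Part~(1), the decomposition $D(T)=N(T)+M\cap D(T)$ together with $M\cap N(T)=\{0\}$ gives each $x\in D(T)$ a unique splitting $x=n+m$ with $n\in N(T)$, $m\in M\cap D(T)$. Thus the restriction $T_1:=T|_{M\cap D(T)}$ is an injection onto $R(T)$. I would define $T^+$ on $D(T^+):=R(T)+(\overline{R(T)})^c$ by the rule $T^+(y_1+y_2):=T_1^{-1}(y_1)$ for $y_1\in R(T)$, $y_2\in(\overline{R(T)})^c$; this is unambiguous because the sum $\overline{R(T)}\dotplus(\overline{R(T)})^c$ is direct and $R(T)\subset\overline{R(T)}$. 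By construction $R(T^+)=M\cap D(T)$ and $TT^+y_1=y_1=Qy_1$ on $R(T)$, while $TT^+y_2=0=Qy_2$ on $(\overline{R(T)})^c$, giving the claim $TT^+=Q$ on $D(T^+)$.

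The heart of Part~(1) is then the closedness of $T^+$. For this I would first observe that $T_1$, being the restriction of the closed operator $T$ to the closed subspace $M$ intersected with $D(T)$, is itself closed from $M$ to $Y$; hence its algebraic inverse $T_1^{-1}\colon R(T)\to M$ is closed. Next, to pass from $T_1^{-1}$ to $T^+$ I would take a sequence $y^{(n)}=y_1^{(n)}+y_2^{(n)}\in D(T^+)$ with $y^{(n)}\to y=y_1+y_2$ and $T^+y^{(n)}\to x$, and use the boundedness of the idempotent $Q$ to conclude $y_1^{(n)}=Qy^{(n)}\to Qy=y_1$ and $y_2^{(n)}\to y_2$; since $T^+y^{(n)}=T_1^{-1}y_1^{(n)}\to x$, closedness of $T_1^{-1}$ gives $y_1\in R(T)$ and $T_1^{-1}y_1=x$, so $y\in D(T^+)$ and $T^+y=x$.

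For Part~(2), taking $M=N(T)^c$ one checks $M\cap N(T)=\{0\}$ and $D(T)=N(T)+N(T)^c\cap D(T)$ (the latter because the projection onto $N(T)^c$ along $N(T)$ is bounded and maps $D(T)$ into $D(T)$ by the hypothesis $X=N(T)\dotplus N(T)^c$); Part~(1) then produces a closed $S$ with the stated domain, null space, and range. The identities \eqref{2eqb} follow from $TS=Q$ on $D(S)$ (already established) and the fact that $ST=I_X-P$ on $D(T)$, which one verifies by splitting $x\in D(T)$ via $x=Px+(I_X-P)x$ with $(I_X-P)x\in N(T)^c\cap D(T)$; uniqueness of $S$ is forced because \eqref{2eqb}–\eqref{2eqc} determine $S$ on $R(T)$ as the inverse of $T|_{N(T)^c\cap D(T)}$ and on $(\overline{R(T)})^c=N(S)$ as zero. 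Finally, if $R(T)$ is closed then $D(S)=R(T)+(\overline{R(T)})^c=Y$, and a closed operator defined on all of $Y$ is bounded by the closed graph theorem.

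The main technical obstacle I anticipate is keeping the domain bookkeeping clean when proving closedness of $T^+$: one has to combine the closedness of $T_1$ (restriction to an unbounded-in-general domain) with the topological splitting of $Y$ furnished by the bounded idempotent $Q$, and the hypothesis that $Q$ is bounded is exactly what lets the component sequences $y_1^{(n)}$, $y_2^{(n)}$ converge separately. Once this separation is in place the rest of the argument is largely bookkeeping of direct-sum decompositions.
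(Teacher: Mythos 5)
Your proposal is correct and follows essentially the same route as the paper: define $S$ via the inverse of the injective closed restriction $T\vert_{M\cap D(T)}$ extended by zero on $(\overline{R(T)})^c$, use the bounded idempotent $Q$ to split a convergent sequence in $D(S)$ and the closedness of that inverse to get $S\in C(Y,X)$, then obtain (2) by taking $M=N(T)^c$ and finish with the Closed Graph Theorem. The only cosmetic difference is the uniqueness step, where the paper computes $S'=S'TS'=(I_X-P)S'=STS'=SQ=S$ while you argue that the identities pin down $S$ pointwise on $R(T)$ and on $N(S)$; both are valid.
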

\begin{proof}(1) Put $A=T\big\vert_{M\cap D(T)}$. It is easy to check that $A$ is a closed operator with $N(A)=\{0\}$
and $R(A)=R(T)$. Thus, $A^{-1}\colon R(T)\rightarrow M\cap D(T)$ is
also a closed operator. Set $Sy=\begin{cases}\, A^{-1}y\quad& y\in
R(T)\\ \, 0\quad& y\in (\overline{R(T)})^c\end{cases}$. Then
$D(S)=R(T)+(\overline{R(T)})^c$ is dense in $Y$, $R(S)=M\cap
D(T)\subset D(T)$ and
$$
TST=T\ \text{on}\ D(T),\quad STS=S\ \text{on}\ D(S),\ TS=Q\
\text{on}\ D(S).
$$

To show that $S\in C(Y,X)$, let $\{y_n\}^\infty_{n=1}\subset D(S)$
such that $\|y_n-y\|\to 0$ and $\|Sy_n-x\|\to 0$ as $n\to\infty$ for
some $y\in Y$ and $x\in X$. Note that $Qy_n\in R(T)$,
$Sy_n=SQy_n=A^{-1}Qy_n$, $n\ge 1$ and $\|Qy_n-Qy\|\to 0$. Since
$A^{-1}\in C(\overline{R(T)},X)$, it follows that $Qy\in R(T)$ and
$A^{-1}Qy=x$  and consequently, $y=Qy+(I_Y-Q)y\in D(S)$ and
$Sy=SQy=x$. Thus, $S\in C(Y,X)$.

(2) Let $M=N(T)^c$ in (1). Then by the proof of (1), $S$ satisfies
the requirements of Proposition \ref{2P1} (2).

Assume that there is another $S'\in C(Y,X)$ with
$D(S')=R(T)+(\overline{R(T)})^c$ such that $S'$ satisfies
(\ref{2eqb}) and (\ref{2eqc}). Then
$$
S'=S'TS'=(I_X-P)S'=STS'=SQ=STS=S\,\ \text{on}\ D(S).
$$

When $R(T)$ is closed, $D(S)=Y$. So $S$ is bounded by Closed Graph
Theorem.
\end{proof}

The operator $S$ in Proposition \ref{2P1} (2) is denoted by
$T^+_{P,\,Q}$.

Let $H$ and $K$ be Hilbert spaces. For a closed subspace $M$ in $H$
(or $K$), let $P_M$ denote the orthogonal projection from $H$ (or
$K$) to $M$. According to Proposition \ref{2P1} and its proof, we
have
\begin{corollary}\label{2C1}
Let $T\in C(H,K)$. Then there is a unique $S\in C(K,H)$ with
$D(S)=R(T)+R(T)^\perp$ and $R(S)=N(T)^\perp\cap D(T)$ such that
\begin{alignat*}{2}
TST&=T\ \text{on}\ D(T)\,\ &\text{and}\,\ STS&=S\ \text{on}\ D(S)\\
TSy&=P_{\overline{R(T)}}y,\ \forall\,y\in D(S)\,\ &\text{and}\,\
STx&=P_{\overline{N(T)^\perp\cap D(T)}}x,\ \forall\,x\in D(T).
\end{alignat*}

In addition, if $R(T)$ is closed, then $S$ is bounded.
\end{corollary}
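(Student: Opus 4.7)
The plan is to derive the corollary by specializing Proposition \ref{2P1}(2) to the Hilbert-space setting, where the algebraic complements can be chosen orthogonally. Since $T\in C(H,K)$ is closed and densely defined, $N(T)$ is a closed subspace of $H$, so $H=N(T)\oplus N(T)^\perp$; likewise $K=\overline{R(T)}\oplus R(T)^\perp$ using $R(T)^\perp=\overline{R(T)}^\perp$. Thus the hypotheses of Proposition \ref{2P1}(2) are satisfied with the choices $N(T)^c=N(T)^\perp$ and $(\overline{R(T)})^c=R(T)^\perp$. That proposition then yields a unique $S\in C(K,H)$ with $D(S)=R(T)+R(T)^\perp$, $R(S)=D(T)\cap N(T)^\perp$, and satisfying $TST=T$ on $D(T)$, $STS=S$ on $D(S)$, $TS=Q$ on $D(S)$, and $ST=I_H-P$ on $D(T)$, where $P$ and $Q$ are the idempotents of $H$ onto $N(T)$ and of $K$ onto $\overline{R(T)}$ respectively. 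In the present orthogonal setup these idempotents are precisely the orthogonal projections $P=P_{N(T)}$ and $Q=P_{\overline{R(T)}}$, so the formula $TSy=P_{\overline{R(T)}}y$ on $D(S)$ is immediate.

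The only point needing extra argument is to identify $I_H-P=P_{N(T)^\perp}$ with the projection $P_{\overline{N(T)^\perp\cap D(T)}}$ that appears in the stated formula. For this I would prove the density
\[
\overline{N(T)^\perp\cap D(T)}=N(T)^\perp,
\]
from which the two orthogonal projections coincide. The density rests on two easy observations: first, $P_{N(T)}$ maps $D(T)$ into $N(T)\subset D(T)$, so $P_{N(T)^\perp}x=x-P_{N(T)}x\in D(T)\cap N(T)^\perp$ whenever $x\in D(T)$; second, $D(T)$ is dense in $H$, so $P_{N(T)^\perp}(D(T))$ is dense in $N(T)^\perp$, and is contained in $D(T)\cap N(T)^\perp$ by the first observation. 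Once this is in place, the relation $STx=P_{\overline{N(T)^\perp\cap D(T)}}x$ for $x\in D(T)$ follows.

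Uniqueness of $S$ and its boundedness when $R(T)$ is closed are inherited directly from the corresponding statements of Proposition \ref{2P1}(2). The main (though still routine) obstacle is the density identification above; everything else is a transcription of Proposition \ref{2P1}(2) under the orthogonal choice of complements, made possible by the closedness of $N(T)$ and $\overline{R(T)}$ in the Hilbert setting.
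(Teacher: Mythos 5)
Your proposal is correct and follows the same route the paper intends: the paper derives the corollary simply by invoking Proposition \ref{2P1}(2) with the orthogonal choices $N(T)^c=N(T)^\perp$ and $(\overline{R(T)})^c=R(T)^\perp$ (both legitimate since $N(T)$ is closed for closed $T$). Your extra density argument showing $\overline{N(T)^\perp\cap D(T)}=N(T)^\perp$, so that $I_H-P_{N(T)}=P_{\overline{N(T)^\perp\cap D(T)}}$, is exactly the detail the paper leaves implicit, and it is carried out correctly.
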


The operator $S$ in Corollary \ref{2C1} is called the the maximal
Tseng inverse of $T$ (cf. \cite{BG}), denote by $T^\dag$. Clearly,
$N(T^\dag)=R(T)^\perp$ and $R(T^\dag)=N(T)^\perp\cap D(T)$.

\begin{lemma}\label{YL1}
Let $T\in C(X,Y)$ with $T^+\in D(Y,X)$ and let $\delta T\colon
D(\delta T)\subset X\rightarrow Y$ be a linear operator with
$D(T)\subset D(\delta T)$. Put $\bar{T}=T+\delta T$. If $R(\delta
T)\subset D(T^+)$, then $I_Y+\delta T T^+\colon D(T^+)\rightarrow
D(T^+)$ is bijective if and only if $I_X+T^+\delta T\colon
D(T)\rightarrow D(T)$ is bijective.
\end{lemma}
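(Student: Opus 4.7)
The plan is to prove the lemma by the classical $I+AB$ versus $I+BA$ trick: if $M:=I_X+T^+\delta T$ has an inverse on $D(T)$, then the operator
\[
V:=I_Y-\delta T\,M^{-1}T^+
\]
will serve as a two-sided inverse of $L:=I_Y+\delta TT^+$ on $D(T^+)$, and symmetrically, if $L$ is bijective, then $I_X-T^+L^{-1}\delta T$ will invert $M$. The entire content of the lemma is algebraic once one has checked that every composition that appears lands in the correct subspace.

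Before doing any computation I would verify the bookkeeping of domains. Since $R(T^+)\subset D(T)\subset D(\delta T)$ and $R(\delta T)\subset D(T^+)$, the composition $\delta T\,T^+$ makes sense on $D(T^+)$ and takes values in $D(T^+)$, so $L$ is a genuine operator $D(T^+)\to D(T^+)$; similarly $T^+\delta T$ maps $D(T)$ into $R(T^+)\subset D(T)$, so $M:D(T)\to D(T)$ is well defined. Assuming $M$ is bijective, $M^{-1}$ is again an operator $D(T)\to D(T)$, and one then reads off that for $y\in D(T^+)$ we have $T^+y\in D(T)$, $M^{-1}T^+y\in D(T)\subset D(\delta T)$, and $\delta T M^{-1}T^+y\in R(\delta T)\subset D(T^+)$, so $V:D(T^+)\to D(T^+)$ is legitimate.

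The two verifications $LV=I_{D(T^+)}$ and $VL=I_{D(T^+)}$ are then short calculations. For instance,
\[
LV=(I_Y+\delta TT^+)(I_Y-\delta T M^{-1}T^+)=I_Y+\delta TT^+-\delta T(I_X+T^+\delta T)M^{-1}T^+=I_Y,
\]
and $VL=I_Y$ by the same rearrangement, using $(I_X+T^+\delta T)M^{-1}=M^{-1}(I_X+T^+\delta T)=I_{D(T)}$. Conversely, if $L$ is bijective, defining $W:=I_X-T^+L^{-1}\delta T$ on $D(T)$ (the domain check is the mirror image of the above) the same manipulation gives $MW=WM=I_{D(T)}$.

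The step I expect to be the only real nuisance is the domain tracking: one must be careful that $M^{-1}$ preserves $D(T)$ (which is immediate from the hypothesis), that $\delta T$ is actually \emph{linear} on all of $D(T)$ (given), and that applying $T^+$ to an element of $R(\delta T)$ never leaves the available domain. Once these bookkeeping remarks are in place, both directions reduce to the two identical three-line algebraic identities above.
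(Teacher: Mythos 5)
Your proposal is correct and is essentially the paper's own argument: both use the classical $I+AB$ versus $I+BA$ identity, writing an explicit two‑sided inverse (the paper uses $I_X-T^+C\delta T$ with $C=(I_Y+\delta TT^+)^{-1}$, you use the mirror‑image $I_Y-\delta T M^{-1}T^+$) and checking the same domain inclusions $R(T^+)\subset D(T)\subset D(\delta T)$ and $R(\delta T)\subset D(T^+)$. The only difference is which implication you carry out explicitly, and your domain bookkeeping is in fact slightly more careful than the paper's.
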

\begin{proof}
Suppose that $I_Y+\delta T T^+$ is bijective. Then there is an
operator $C\colon D(T^+)\rightarrow D(T^+)$ such that $(I_Y+\delta T
T^+)C=C(I_Y+\delta T T^+)=I_Y\ \text{on}\ D(T)$, that is,
\begin{equation}\label{2eqe}
C\delta T T^+=\delta TT^+C=I_X-C\ \text{on}\ D(T^+).
\end{equation}
Thus, from (\ref{2eqe}), we get that for any $\xi\in D(T^+)$,
\begin{align*}
(I_X+T^+\delta T)(I_X-T^+C\delta T)\xi&=\xi+T^+\delta T\xi-T^+C\delta T\xi-T^+\delta TT^+C\delta T\xi\\
&=\xi+T^+\delta T\xi-T^+C\delta T\xi-T^+(I_X-C)\delta T\xi\\
&=\xi.
\end{align*}
Similarly, $(I_X-T^+C\delta T)(I_X+T^+\delta T)\xi=\xi$,
$\forall\,\xi\in D(T^+)$. Therefore, $I_X+T^+\delta T$ is bijective.

Conversely, if $I_X+T^+\delta T$ is bijective, we can obtain that
$I_Y+\delta TT^+$ by using similar way.
\end{proof}

\begin{lemma}\label{YL2}
Let $T\in C(X,Y)$ with $T^+\in D(Y,X)$. Let $\delta T\colon D(\delta
T)\subset X\rightarrow D(T^+)$ be a linear operator such that
$D(T)\subset D(\delta T)$. Put $\bar T=T+\delta T$. Assume that
$I_X+T^+\delta T$ $\colon D(T)\rightarrow D(T)$ is bijective and
$R(\bar T)\cap N(T^+)=\{0\}$. Then $N(\bar T)=(I_X+T^+\delta
T)^{-1}N(T)$.
\end{lemma}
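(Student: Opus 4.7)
The plan is to verify both inclusions. Neither direction actually uses the bijectivity of $I_X+T^+\delta T$ in an essential way; that hypothesis serves mainly to give unambiguous meaning to the expression $(I_X+T^+\delta T)^{-1}N(T)$, which I will read as the preimage $\{x\in D(T):(I_X+T^+\delta T)x\in N(T)\}$. Both arguments rely on two basic identities coming from $T^+$ being a generalized inverse: $TT^+T=T$ on $D(T)$ and $T^+TT^+=T^+$ on $D(T^+)$.

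For the inclusion $N(\bar T)\subset (I_X+T^+\delta T)^{-1}N(T)$, I would pick $x\in N(\bar T)\subset D(T)$ and use $\bar Tx=0$ to substitute $\delta T x=-Tx$. Since $R(\delta T)\subset D(T^+)$ and since $I_X+T^+\delta T$ maps $D(T)$ into $D(T)$, the element $(I_X+T^+\delta T)x$ lies in $D(T)$ and one computes
\[
T(I_X+T^+\delta T)x=Tx+TT^+\delta T x=Tx-TT^+Tx=0,
\]
using $TT^+T=T$ on $D(T)$. This gives $(I_X+T^+\delta T)x\in N(T)$ as required.

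For the reverse inclusion, I would take $x\in D(T)$ with $(I_X+T^+\delta T)x\in N(T)$, so that $Tx=-TT^+\delta T x$. Substituting this into $\bar T x$ yields
\[
\bar Tx=Tx+\delta T x=(I_Y-TT^+)\delta T x,
\]
and in particular $\bar Tx\in D(T^+)$ since $\delta Tx\in D(T^+)$. Applying $T^+$ and using $T^+TT^+=T^+$ on $D(T^+)$ gives $T^+\bar Tx=T^+\delta Tx-T^+\delta Tx=0$, so $\bar Tx\in N(T^+)$. Because $\bar Tx\in R(\bar T)$ as well, the hypothesis $R(\bar T)\cap N(T^+)=\{0\}$ forces $\bar Tx=0$, i.e.\ $x\in N(\bar T)$.

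The only thing to watch is domain bookkeeping: one must check at each step that $\delta Tx\in D(T^+)$, that $T^+\delta Tx\in D(T)$ so that $T(T^+\delta T)x$ makes sense, and that $\bar Tx\in D(T^+)$ so that $T^+\bar Tx$ makes sense. All three are automatic from the standing assumptions $R(\delta T)\subset D(T^+)$ and $I_X+T^+\delta T\colon D(T)\to D(T)$ together with the rewriting of $\bar Tx$ above. There is no real obstacle beyond this; the content of the lemma is captured by the two short computations based on the identities $TT^+T=T$ and $T^+TT^+=T^+$, and the use of $R(\bar T)\cap N(T^+)=\{0\}$ enters only at the very end of the reverse inclusion.
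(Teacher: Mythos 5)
Your proof is correct and follows essentially the same route as the paper's: the forward inclusion rewrites $(I_X+T^+\delta T)x$ as $(I_X-T^+T)x\in N(T)$ via $TT^+T=T$, and the reverse inclusion applies $T^+$ to $\bar Tx$ and invokes $R(\bar T)\cap N(T^+)=\{0\}$ exactly as the paper does. Your observation that bijectivity of $I_X+T^+\delta T$ is only needed to interpret $(I_X+T^+\delta T)^{-1}N(T)$ (here as a preimage rather than as the image under the inverse map, which coincide under the hypothesis) is a fair cosmetic refinement, not a different argument.
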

\begin{proof} Let $x\in N(\bar T)$. Then $Tx=-\delta Tx$ and $(I_X-T^+T)x=(I_X+T^+\delta T)x$. Note that $(I_X-T^+T)x\in
N(T)$ and $I_X+T^+\delta T$ is bijective. So $x\in (I_X+T^+\delta
T)^{-1}N(T)$.

Now let $x\in N(T)$ and put $z=(I_X+T^+\delta T)^{-1}x$. Then
$(I_X+T^+\delta T)z=x$ and $T(I_X+T^+\delta T)z=0$. Thus, $T^+\bar
Tz=0$. Since $R(\bar T)\cap N(T^+)=\{0\}$, it follows that $\bar
Tz=0$, i.e., $x\in N(\bar T)$.
\end{proof}

\section{Stable perturbation in Banach spaces}
\setcounter{equation}{0}

Let $T\in C(X,Y)$ and let $\delta T\colon D(\delta T)\rightarrow Y$
be a linear operator with $D(T)\subset D(\delta T)$. Recall that
$\delta T$ is $T$--bounded if there are constants $a,\,b>0$ such
that
$$
\|\delta Tx\|\le a\|x\|+b\|Tx\|,\quad \forall\,x\in D(T).
$$
We have known from \cite[Chap 4, Theorem 1.1]{TK} that $\bar
T=T+\delta T\in C(X,Y)$ when $\delta T$ is $T$--bounded with $b<1$.

Let $T\in C(X,Y)$ such that $T^+$ exists and let $\delta T\colon
D(\delta T)\rightarrow Y$ be a linear operator with $D(T)\subset
D(\delta T)$, $T$--bounded and $b<1$. Put $\bar T=T+\delta T\in
C(X,Y)$. According to \cite{CX1}, we say $\bar T$ is a stable
perturbation of $T$ if $R(\bar T)\cap N(T^+)=\{0\}$.

The following theorem characterizes when $I_Y+\delta TT^+\colon
D(T^+)\rightarrow D(T^+)$ is bijective and $\bar T$ is a stable
perturbation of $T$.
\begin{theorem}\label{DL1}
Let $T\in C(X,Y)$ with $T^+\in D(Y,X)$ and let $\delta T\colon
D(\delta T)\rightarrow D(T^+)$ be a linear operator such that
$D(T)\subset D(\delta T)$ and $\delta T$ is $T$--bounded with $b<1$.
Put $\bar T=T+\delta T\in C(X,Y)$. Then the following statements are
equivalent:
\begin{enumerate}
\item[$(1)$] $I_Y+\delta TT^+\colon D(T^+)\rightarrow D(T^+)$ is bijective\,$;$
\item[$(2)$] $T^+\bar T\big\vert_{R(T^+)}=(I_X+T^+\delta T)\vert_{R(T^+)}\colon R(T^+)\rightarrow R(T^+)$ is bijective\,$;$
\item[$(3)$] $D(T^+)=\bar TR(T^+)+N(T^+)$, $\bar TR(T^+)\cap N(T^+)=\{0\}$ and $N(\bar T)\cap R(T^+)=\{0\}$.

\end{enumerate}
\end{theorem}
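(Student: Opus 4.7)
The plan is to establish (1)$\Leftrightarrow$(2) by combining Lemma \ref{YL1} with the direct sum decomposition $D(T)=N(T)\dotplus R(T^+)$ induced by the idempotent $T^+T$, and then to deduce (2)$\Rightarrow$(3) and (3)$\Rightarrow$(2) by short manipulations with the generalized-inverse identities.

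For (1)$\Leftrightarrow$(2), Lemma \ref{YL1} reduces (1) to the bijectivity of $I_X+T^+\delta T\colon D(T)\to D(T)$. The identities $TT^+T=T$ and $T^+TT^+=T^+$, together with $R(T^+)\subset D(T)$, show that $T^+T$ is an idempotent on $D(T)$ with $N(T^+T)=N(T)$ and $R(T^+T)=R(T^+)$, so $D(T)=N(T)\dotplus R(T^+)$. Since $R(T^+\delta T)\subset R(T^+)$, with respect to this splitting $I_X+T^+\delta T$ is block lower-triangular with the identity on the $N(T)$-block and $(I_X+T^+\delta T)|_{R(T^+)}$ on the $R(T^+)$-block, so its bijectivity on $D(T)$ is equivalent to bijectivity of the restriction. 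The identity $T^+\bar Tx=(I_X+T^+\delta T)x$ for $x\in R(T^+)$ follows from $T^+Tx=x$.

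For (2)$\Rightarrow$(3), set $R_+=(I_X+T^+\delta T)|_{R(T^+)}$. If $x\in R(T^+)$ and $\bar Tx\in N(T^+)$, then $R_+x=T^+\bar Tx=0$, so $x=0$; this yields both $\bar TR(T^+)\cap N(T^+)=\{0\}$ and $N(\bar T)\cap R(T^+)=\{0\}$. Given $y\in D(T^+)$, surjectivity of $R_+$ produces $x\in R(T^+)$ with $R_+x=T^+y$; then $T^+(y-\bar Tx)=0$, so $y=\bar Tx+(y-\bar Tx)$ is the desired splitting (and $\bar Tx\in D(T^+)$ because $R(T),R(\delta T)\subset D(T^+)$).

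Conversely, for (3)$\Rightarrow$(2), injectivity of $R_+$ follows from $R_+x=0\Rightarrow\bar Tx\in\bar TR(T^+)\cap N(T^+)=\{0\}\Rightarrow x\in N(\bar T)\cap R(T^+)=\{0\}$. For surjectivity, pick $r\in R(T^+)\subset D(T)$; then $Tr\in R(T)\subset D(T^+)$, and the decomposition in (3) gives $Tr=\bar Tx+z$ with $x\in R(T^+)$, $z\in N(T^+)$, so applying $T^+$ yields $r=T^+Tr=T^+\bar Tx=R_+x$. The main obstacle is conceptual rather than computational: spotting the lower-triangular structure of $I_X+T^+\delta T$ relative to $D(T)=N(T)\dotplus R(T^+)$, which is what reduces the global bijectivity question from $D(T)$ to $R(T^+)$ and turns all three implications into one-line verifications.
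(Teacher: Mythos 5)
Your proof is correct, and the way you handle the equivalence of (1) and (2) is genuinely different from the paper's. The paper proves a cycle $(1)\Rightarrow(2)\Rightarrow(3)\Rightarrow(1)$: for $(1)\Rightarrow(2)$ it works on the $Y$-side, decomposing $W=I_Y+\delta TT^+=\bar TT^++(I_Y-TT^+)$ on $D(T^+)$ to get surjectivity of $T^+\bar T\vert_{R(T^+)}$ and invoking Lemma \ref{YL1} only for injectivity; for $(3)\Rightarrow(1)$ it constructs a preimage $\xi=TT^+\xi_1+\xi_2$ by hand and checks injectivity of $I_Y+\delta TT^+$ directly. You instead pass immediately to the $X$-side via Lemma \ref{YL1} and observe that, relative to the algebraic splitting $D(T)=N(T)\dotplus R(T^+)$ coming from the idempotent $T^+T$, the operator $I_X+T^+\delta T$ is block lower-triangular with identity diagonal block on $N(T)$ (this uses exactly the hypothesis $R(\delta T)\subset D(T^+)$, which forces $R(T^+\delta T)\subset R(T^+)$); this gives both directions of $(1)\Leftrightarrow(2)$ at once, something the paper only obtains by routing $(2)\Rightarrow(1)$ through $(3)$. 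Your $(2)\Rightarrow(3)$ matches the paper's, and your $(3)\Rightarrow(2)$ (applying $T^+$ to the decomposition of $Tr$) replaces the paper's $(3)\Rightarrow(1)$. What your route buys is a transparent structural reason why bijectivity on all of $D(T)$ reduces to bijectivity on $R(T^+)$; what the paper's route buys is that the $Y$-side computation with $W$ produces the geometric conditions of (3) as a byproduct. You were also right to note the domain checks ($\bar Tx\in D(T^+)$ because $R(T),R(\delta T)\subset D(T^+)$), which are exactly the points where the unbounded setting could otherwise bite.
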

\begin{proof} (1)$\Rightarrow$(2) Assume that $W=I_Y+\delta TT^+\colon D(T^+)\rightarrow D(T^+)$ is bijective. From $W=\bar TT^++(I_Y-TT^+)$
and $(I_Y-TT^+)D(T^+)=N(T^+)$, we get that
$D(T^+)=WD(T^+)\subset\bar TR(T^+)+N(T^+)$. Note that $\bar
TR(T^+)\subset D(T^+)$ and $N(T^+)\subset D(T^+)$. So $\bar
TR(T^+)+N(T^+)=D(T^+)$ and consequently, $R(T^+)=T^+\bar TR(T^+)$.
This shows that $D=T^+\bar T\big\vert_{R(T^+)}\colon
R(T^+)\rightarrow R(T^+)$ is surjective.

Now let $\xi\in R(T^+)$ and $T^+\bar T\xi=0$. Then
$$
(I_X+T^+\delta T)\xi=(I_X-T^+T)\xi+T^+\bar T\xi=0
$$
and consequently, $\xi=0$ by Lemma \ref{YL1}, that is, $D$ is
injective.

Noting that $T^+\bar TT^+=T^+(T+\delta T)T^+=(I_X+T^+\delta T)T^+$,
we have $D=(I_X+T^+\delta T)\big\vert_{R(T^+)}$.

(2)$\Rightarrow$(3) For any $\xi\in D(T^+)$ there is $\eta\in
D(T^+)$ such that $T^+\xi=T^+\bar TT^+\eta$ since $D$ is surjective.
Thus, $\zeta=\xi-\bar TT^+\eta\in N(T^+)$ and so that
$D(T^+)\subset\bar TR(T^+)+N(T^+)\subset D(T^+)$.

Let $\xi\in\bar TR(T^+)\cap N(T^+)$. Then $T^+\xi=0$ and $\xi=\bar
TT^+\eta$ for some $\eta\in D(T^+)$. So $DT^+\eta=0$. Since $D$ is
injective, we have $T^+\eta=0$ and so that $\xi=0$. This proves that
$\bar TR(T^+)\cap N(T^+)=\{0\}$.

Similarly, we can obtain $N(\bar T)\cap R(T^+)=\{0\}$.

(3)$\Rightarrow$(1) Since $D(T^+)=\bar TR(T^+)+N(T^+)$, it follows
that for any $\eta\in D(T^+)$, there is $\xi_1\in D(T^+)$ and
$\xi_2\in N(T^+)$ such that $\eta=\bar TT^+\xi_1+\xi_2$. Put
$\xi=TT^+\xi_1+\xi_2\in D(T^+)$. Then
$$
(I_Y+\delta TT^+)\xi=(I_Y-TT^+)\xi+\bar TT^+\xi=\xi_2+\bar
TT^+\xi_1=\eta,
$$
that is, $I_Y+\delta TT^+\colon D(T^+)\rightarrow D(T^+)$ is
surjective.

To prove $I_Y+\delta TT^+$ is injective, let $\zeta\in D(T^+)$ such
that $(I_Y+\delta TT^+)\zeta=0$. Then $(I_Y-TT^+)\zeta=-\bar
TT^+\zeta$. Since $\bar TR(T^+)\cap N(T^+)=\{0\}$, we get that
$TT^+\zeta=\zeta$ and $\bar TT^+\zeta=0$ and so $T^+\zeta\in N(\bar
T)\cap R(T^+)$. Now from the assumption that $N(\bar T)\cap
R(T^+)=\{0\}$, we obtain that $T^+\zeta=0$. Thus,
$\zeta=TT^+\zeta=0$.
\end{proof}

\begin{corollary}\label{3C1}
Let $T\in C(X,Y)$ with $T^+\in D(Y,X)$ and let $\delta T\colon
D(\delta T)\rightarrow D(T^+)$ be a linear operator such that
$D(T)\subset D(\delta T)$ and $\delta T$ is $T$--bounded with $b<1$.
Put $\bar T=T+\delta T\in C(X,Y)$.
\begin{enumerate}
\item[$(1)$] If $\bar T$ and $T$ satisfy following conditions:
\begin{alignat*}{2}
N(\bar{T})&\cap R(T^+)=\{0\},&\quad R(\bar{T})&\cap N(T^+)=\{0\},\\
D(T)&=N(\bar T)+R(T^+),&\quad D(T^+)&=N(T^+)+R(\bar T),
\end{alignat*}
then $I_Y+\delta TT^+\colon D(T^+)\rightarrow D(T^+)$ is bijective.
\item[$(2)$] If $I_Y+\delta TT^+\colon D(T^+)\rightarrow D(T^+)$ is bijective and $R(\bar T)\cap N(T^+)=\{0\}$, then
$D(T)=N(\bar T)+R(T^+)$ and $D(T^+)=N(T^+)+R(\bar T)$.
\end{enumerate}
\end{corollary}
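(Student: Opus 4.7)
The plan is to reduce both parts to the equivalence $(1)\Leftrightarrow(3)$ in Theorem~\ref{DL1} and, for part~(2), to combine that equivalence with Lemmas~\ref{YL1} and~\ref{YL2}.

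For part~(1), I would verify the three conditions constituting Theorem~\ref{DL1}(3). The condition $N(\bar T)\cap R(T^+)=\{0\}$ is among the hypotheses. The inclusion $\bar TR(T^+)\subset R(\bar T)$ reduces $R(\bar T)\cap N(T^+)=\{0\}$ to $\bar TR(T^+)\cap N(T^+)=\{0\}$. For the sum $D(T^+)=\bar TR(T^+)+N(T^+)$, I would start from the hypothesis $D(T^+)=N(T^+)+R(\bar T)$ and use $R(\bar T)=\bar TD(T)=\bar T\bigl(N(\bar T)+R(T^+)\bigr)=\bar TR(T^+)$, where the last step uses that $\bar T$ vanishes on $N(\bar T)$. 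Theorem~\ref{DL1} then delivers the bijectivity of $I_Y+\delta TT^+$.

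For part~(2), the equality $D(T^+)=N(T^+)+R(\bar T)$ is nearly immediate: Theorem~\ref{DL1} yields $D(T^+)=\bar TR(T^+)+N(T^+)$, and since $R(T)\subset D(T^+)$ (from $TT^+T=T$) and $R(\delta T)\subset D(T^+)$ by hypothesis, one has $R(\bar T)\subset D(T^+)$, so $D(T^+)=R(\bar T)+N(T^+)$. The substantive equality $D(T)=N(\bar T)+R(T^+)$ I would obtain from the standard decomposition $D(T)=N(T)\dotplus R(T^+)$ arising from the idempotent $T^+T$ on $D(T)$, combined with Lemma~\ref{YL2}. Its hypotheses are at hand: Lemma~\ref{YL1} converts the bijectivity of $I_Y+\delta TT^+$ into that of $I_X+T^+\delta T$, and $R(\bar T)\cap N(T^+)=\{0\}$ is assumed. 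Given $x=n_0+r_0\in D(T)$ with $n_0\in N(T)$, $r_0\in R(T^+)$, I would set $m=(I_X+T^+\delta T)^{-1}n_0$, which lies in $N(\bar T)$ by Lemma~\ref{YL2}, and rewrite $x=m+(T^+\delta Tm+r_0)$, observing that $T^+\delta Tm\in R(T^+)$ since $R(\delta T)\subset D(T^+)$.

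The main obstacle is bookkeeping rather than genuine difficulty: one has to keep checking that each composition lies in the appropriate domain -- that $R(\bar T)\subset D(T^+)$ for the rewriting of $D(T^+)$ to be legitimate, and that the element produced via Lemma~\ref{YL2} actually sits in the pieces of the decomposition of $D(T)$. No new perturbation estimate appears necessary; the $T$--boundedness hypothesis enters only through the invocation of Theorem~\ref{DL1}.
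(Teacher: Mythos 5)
Your proposal is correct and follows essentially the same route as the paper: part (1) verifies the three conditions of Theorem~\ref{DL1}(3) in the identical way, and part (2) rests on the same key identification $N(\bar T)=(I_X+T^+\delta T)^{-1}N(T)$ from Lemma~\ref{YL2} together with the splitting of $D(T)$ by the idempotent $T^+T$. The only cosmetic difference is that you carry out the decomposition $x=m+(T^+\delta Tm+r_0)$ element by element, whereas the paper packages the same computation as the operator identity $I_X=(I_X+T^+\delta T)^{-1}(I_X-T^+T)+T^+(I_Y+\delta TT^+)^{-1}\bar T$ on $D(T)$.
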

\begin{proof} (1) $R(\bar T)\cap N(T^+)=\{0\}$ implies that $\bar TR(T^+)\cap N(T^+)=\{0\}$. Since
$D(T)=N(\bar T)+R(T^+)$, we have $R(\bar T)=\bar TR(T^+)$. Thus,
$$
D(T^+)=R(\bar T)+N(T^+)=\bar TR(T^+)+N(T^+)
$$
and hence $I_Y+\delta TT^+\colon D(T^+)\rightarrow D(T^+)$ is
bijective by Theorem \ref{DL1}.

(2) By Theorem \ref{DL1}, $D(T^+)=\bar TR(T^+)+N(T^+)$ when
$I_Y+\delta TT^+$ is bijective. Noting that $\bar TR(T^+)\subset
R(\bar T)\subset D(T^+)$,  we have $D(T^+)=N(T)+R(\bar T)$.

Since $I_X+T^+\delta T=I_X-T^+T+T^+\bar T$ is bijective by Lemma
\ref{YL1} and $(I_X+T^+\delta T)T^+=T^+(I_Y+\delta TT^+)$ on
$D(T^+)$, we have
$$
I_X=(I_X+T^+\delta T)^{-1}(I_X-T^+T)+T^+(I_Y+\delta TT^+)^{-1}\bar
T\quad \text{on}\ D(T).
$$
Therefore, $D(T)=N(\bar T)+R(T^+)$ by Lemma \ref{YL2}.
\end{proof}

Now we present our main result of the paper as follows.
\begin{theorem}\label{DL2}
Let $X,\,Y$ be Banach Spaces and let $T\in C(X,Y)$ with $T^+\in
D(Y,X)$. Let $\delta T\colon D(\delta T)\rightarrow D(T^+)$ be a
linear operator such that $D(T)\subset D(\delta T)$. Assume that
$\delta T$ is $T$--bounded with $b<1$ and $I_Y+\delta TT^+\colon
D(T^+)\rightarrow D(T^+)$ is bijective. Put $\bar{T}=T+\delta T$ and
$G=T^+(I_Y+\delta TT^+)^{-1}$. Consider following two statements
\rm{(A)} and \rm{(B)}. We have \vspace{1mm}

\noindent{\rm(A)} The following conditions are equivalent:
\begin{enumerate}
  \item[$(1)$] $R(\bar{T})\cap N(T^+)=\{0\};$
  \item[$(2)$] $G=\bar T^+\in D(Y,X)$ with $R(G)=R(T^+)$, $N(G)=N(T^+);$
  \item[$(3)$] $(I_Y+\delta TT^+)^{-1}\bar{T}$ maps $N(T)$ into $R(T);$
  \item[$(4)$] $(I_Y+\delta T T^+)^{-1}R(\bar{T})=R(T);$
  \item[$(5)$]  $(I_X+T^+\delta T)^{-1}N(T)=N(\bar T)$.
 \end{enumerate}
\noindent{\rm(B)} Further assume that $\delta T\in C(X,Y)$, $T^+\in
C(Y,X)$ and
\begin{equation}\label{3eqb}
c=\sup\{\|TT^+x\|\vert\,x\in D(T^+),\, \|x\|=1\}<+\infty,
\end{equation}
\rm{(e.g. $T$ satisfies conditions of Proposition \ref{2P1} (1))}.
If $bc<1$ \rm{(note that $c\ge 1$)}, then $G\in C(Y,X)$.
\end{theorem}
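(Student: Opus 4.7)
Write $W = I_Y + \delta T T^+$ throughout. The plan is to first distill three algebraic identities and then drive (A) through a short chain of equivalences, while for (B) I would argue closedness of $G$ directly via a Cauchy estimate that uses $bc<1$. The three identities, each proved by a one-line expansion, are: on $D(T^+)$, $T^+W = (I_X + T^+\delta T)T^+$, which after post-composition with $W^{-1}$ gives $G = T^+W^{-1} = (I_X + T^+\delta T)^{-1}T^+$; on $D(T)$, splitting $\delta T = \delta T T^+T + \delta T(I_X - T^+T)$ produces $\bar T = WT + \delta T(I_X - T^+T)$, whence $W^{-1}\bar T = T + W^{-1}\delta T(I_X - T^+T)$; and on $D(T^+)$, $\bar T T^+ = W - (I_Y - TT^+)$, whence $\bar T G = I_Y - (I_Y - TT^+)W^{-1}$.

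For part (A), the first observation is that $W$ acts as the identity on $N(T^+)$ (since $T^+v = 0$ forces $Wv = v$), so $W^{-1}$ also fixes $N(T^+)$; hence $R(G) = R(T^+)$ and $N(G) = N(T^+)$ hold unconditionally, and $G\bar T G = G$ on $D(T^+)$ is automatic because $\bar T G y = y - (I_Y - TT^+)W^{-1}y$ has its subtracted piece inside $N(T^+) = N(G)$. Consequently (2) collapses to $\bar T G \bar T = \bar T$ on $D(T)$, which after substituting the identities above and using $(I_Y - TT^+)T = 0$ is equivalent to $W^{-1}\delta T z \in R(T)$ for every $z \in N(T)$, namely (3). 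The equality $W^{-1}R(\bar T) = R(T) + W^{-1}\delta T N(T)$ delivers (3)$\Leftrightarrow$(4). The step I would flag as the main obstacle of (A) is (1)$\Rightarrow$(3): fix $z \in N(T)$ and $u = W^{-1}\delta T z$; unwinding $Wu = \delta T z$ produces the critical identity $(I_Y - TT^+)u = \bar T(z - T^+u)$, whose left side lies in $N(T^+)$ and right side in $R(\bar T)$, so (1) forces both sides to vanish and yields $u \in R(T)$. The converse (2)$\Rightarrow$(1) is immediate, since $\bar T G = I$ on $R(\bar T)$ forces $R(\bar T)\cap N(G) = \{0\}$. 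Finally (1)$\Leftrightarrow$(5): Lemma \ref{YL2} supplies $\Rightarrow$, and conversely, if $\bar T x \in N(T^+)$ then $(I_X + T^+\delta T)x = (I_X - T^+T)x \in N(T)$, so $x \in (I_X + T^+\delta T)^{-1}N(T) = N(\bar T)$ by (5), forcing $\bar T x = 0$.

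For part (B), take $y_n \in D(T^+)$ with $y_n \to y$ and $G y_n \to x$, and set $z_n = W^{-1}y_n$, so that $T^+z_n = G y_n \to x$ and $z_n + \delta T T^+ z_n = y_n$. Applying the $T$-bound to $T^+(z_n - z_m) \in D(T)$ together with $\|TT^+w\| \le c\|w\|$ yields $\|\delta T T^+(z_n - z_m)\| \le a\|T^+(z_n - z_m)\| + bc\|z_n - z_m\|$, and then taking norms in $z_n - z_m = (y_n - y_m) - \delta T T^+(z_n - z_m)$ produces
\[
(1 - bc)\|z_n - z_m\| \le \|y_n - y_m\| + a\|T^+(z_n - z_m)\|.
\]
Since $bc < 1$ and both right-hand terms are Cauchy, $\{z_n\}$ converges in $Y$ to some $z$. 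Closedness of $T^+$ then gives $z \in D(T^+)$ and $T^+z = x$; closedness of $\delta T$ applied to $T^+z_n \to T^+z$ with $\delta T T^+z_n = y_n - z_n \to y - z$ gives $\delta T T^+z = y - z$, i.e.\ $Wz = y$. Hence $y \in W(D(T^+)) = D(T^+)$ and $Gy = T^+z = x$, proving $G \in C(Y,X)$. The principal obstacle here is the displayed Cauchy estimate: the hypothesis $bc < 1$ is indispensable precisely to absorb the term $bc\|z_n - z_m\|$ on the left-hand side; once this is done, the closedness of $\delta T$ and $T^+$ close the argument immediately.
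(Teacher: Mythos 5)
Your argument is correct, and part (B) coincides with the paper's proof essentially line for line (same substitution $z_n=(I_Y+\delta TT^+)^{-1}y_n$, same Cauchy estimate absorbing $bc\|z_n-z_m\|$, same two applications of closedness). Part (A), however, is organized along a genuinely different route. The paper runs the cycle $(1)\Rightarrow(2)\Rightarrow(3)\Rightarrow(4)\Rightarrow(1)$ together with $(1)\Rightarrow(5)\Rightarrow(3)$, and its key step $(1)\Rightarrow(2)$ leans on Lemma \ref{YL2} to kill the term $\bar T(I_X+T^+\delta T)^{-1}(I_X-T^+T)$ in the expansion of $\bar TG\bar T$. You instead observe at the outset that $W=I_Y+\delta TT^+$ fixes $N(T^+)$ pointwise, so $R(G)=R(T^+)$, $N(G)=N(T^+)$ and $G\bar TG=G$ hold with no hypothesis, reducing condition $(2)$ to the single identity $\bar TG\bar T=\bar T$, which your decomposition $W^{-1}\bar T=T+W^{-1}\delta T(I_X-T^+T)$ shows is precisely condition $(3)$; this makes $(2)\Leftrightarrow(3)\Leftrightarrow(4)$ a purely formal matter. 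Your proof of $(1)\Rightarrow(3)$ via the identity $(I_Y-TT^+)u=\bar T(z-T^+u)$ for $u=W^{-1}\delta Tz$, $z\in N(T)$, is a self-contained replacement for the paper's detour through $(2)$ and Lemma \ref{YL2}, and your $(5)\Rightarrow(1)$ is slightly more direct than the paper's $(5)\Rightarrow(3)$. What your version buys is a cleaner separation between what is automatic and what actually requires the stable-perturbation hypothesis; what the paper's version buys is that the generalized-inverse identities for $G$ are verified explicitly, which is what the subsequent corollaries quote. All the domain bookkeeping in your argument (e.g.\ $z-T^+u\in D(T)\subset D(\delta T)$, $R(\delta T)\subset D(T^+)$ so that $W^{-1}$ may be applied) checks out.
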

\begin{proof} We first prove statement (A).

$(1)\Rightarrow (2)$   We have $\bar T\in C(X,Y)$ and
$G=T^+(I_Y+\delta TT^+)^{-1}=(I_X+T^+\delta T)^{-1}T^+$ by Lemma
\ref{YL1}.

We now check that $\bar TG\bar T=\bar T$ on $D(T)$ and $G\bar TG=G$
on $D(T^+)$. We have
\begin{align*}
\bar{T}G\bar T&=(T+\delta T)T^+(I_Y+\delta TT^+)^{-1}(T+\delta T)\\
&=(T+\delta T)(I_X+T^+\delta T)^{-1}(T^+T+T^+\delta T)\\
&=(T+\delta T)(I_X+T^+\delta T)^{-1}(T^+T-I_X+I_X+T^+\delta T)\\
&=-\bar T(I_X+T^+\delta T)^{-1}(I_X-T^+T)+\bar T\\
&=\bar T
\end{align*}
on $D(T)$ by Lemma \ref{YL2} since $R(\bar{T})\cap N(T^+)=\{0\}$.
Also, we have
\begin{align*}
G\bar{T}Gy&=T^+(I_Y+\delta TT^+)^{-1}(T+\delta T)T^+(I_Y+\delta TT^+)^{-1}y\\
&=T^+(I_Y+\delta TT^+)^{-1}(I_Y+\delta TT^+)TT^+(I_Y+\delta TT^+)^{-1}y\\
&=T^+(I_Y+\delta TT^+)^{-1}y=Gy
\end{align*}
for any $y\in D(T^+)$.

From $G=T^+(I_Y+\delta TT^+)^{-1}=(I_X+T^+\delta T)^{-1}T^+$, we
obtain $R(G)=R(T^+)$ and $N(G)=N(T^+)$.

$(2)\Rightarrow (3)$ According to the proof of $(1)\Rightarrow (2)$,
we have
\begin{equation}\label{eq2g}
\bar T(I_X+T^+\delta T)^{-1}(I_X-T^+T)=0.
\end{equation}
Thus, by (\ref{eq2g}),
$$
(I_Y-TT^+)(I_Y+\delta TT^+)\bar T(I_X-T^+T)=(I_Y-TT^+)\delta
T(I_X+T^+\delta T)(I_X-T^+T)=0
$$
on $D(T)$. This means that $(I_Y+\delta TT^+)^{-1}\bar{T}$ maps
$N(T)$ into $R(T)$.

$(3)\Rightarrow (4)$ Let $x\in D(T)$ and put $x_1=T^+Tx$,
$x_2=(I_X-T^+T)x\in N(T)$. Then $(I_Y+\delta TT^+)^{-1}\bar Tx_2 \in
R(T)$ by the assumption. Since
$$
(I_Y+\delta TT^+)^{-1}\bar Tx_1=(I_Y+\delta TT^+)^{-1}(I_Y+\delta
TT^+)Tx_1=Tx_1\in R(T),
$$
it follows that $(I_Y+\delta TT^+)^{-1}R(\bar T)\subset R(T)$. On
the other hand, for any $x\in D(T)$
$$
(I_Y+\delta TT^+)Tx=\bar TT^+Tx\in R(\bar T)\subset D(T^+).
$$
So $R(T)\subset(I_Y+\delta TT^+)^{-1}R(\bar T)$ and consequently,
$R(T)=(I_Y+\delta TT^+)^{-1}R(\bar T)$.

$(4)\Rightarrow (1)$  Let $\xi\in R(\bar{T})\cap N(T^+)$. Then
$T^+\xi=0$ and $\xi=(I_Y+\delta TT^+)T\eta$ for some $\eta\in D(T)$.
Thus, $(I_X+T^+\delta T)T^+T\eta=0$ and hence $T^+T\eta=0$. This
implies that $\xi=0$.

The implication $(1)\Rightarrow (5)$ is Lemma \ref{YL2}. To complete
the proof, we now show the implication $(5)\Rightarrow (3)$. Since
$\bar T(I_X+T^+\delta T)^{-1}(I_X-T^+T)=0$, we have
\begin{align*}
T(I_X+T^+\delta T)^{-1}(I_X-T^+T)&=-(I_Y+\delta TT^+)^{-1}\delta T(I_X-T^+T)\\
&=-(I_Y+\delta TT^+)^{-1}\bar T(I_X-T^+T),
\end{align*}
that is, $(I_Y+\delta TT^+)^{-1}\bar T$ maps $N(T)$ into $R(T)$.

(B) To prove $G\in C(Y,X)$, let $\{y_n\}^\infty_{n=1}\subset D(T^+)$
and $y\in Y$, $x\in X$ such that $\|y_n-y\|\to 0$ and $\|Gy_n-x\|\to
0$ ($n\to\infty$). Set $z_n=(I_Y+\delta TT^+)^{-1}y_n\in D(T^+)$,
$n\ge 1$. Then $z_n=y_n-\delta TT^+z_n$, $n\ge 1$ and
$\|T^+z_n-x\|\to 0$ ($n\to\infty$). Since $\delta T$ is
$T$--bounded, we have, for any $m,\,n\ge 1$,
\begin{align*}
\|z_n-z_m\|&\le\|y_n-y_m\|+\|\delta TT^+(z_n-z_m)\|\\
&\le\|y_n-y_m\|+a\|T^+z_n-T^+z_m\|+b\|TT^+(z_n-z_m)\|\\
&\le\|y_n-y_m\|+a\|T^+z_n-T^+z_m\|+bc\|z_n-z_m\|.
\end{align*}
Thus, $\|z_n-z_m\|<(1-bc)^{-1}(\|y_n-y_m\|+a\|T^+z_n-T^+z_m\|)$,
$m,\,n\ge 1$ and that $\{z_n\}^\infty_{n=1}$ is a Cauchy sequence in
$Y$. Let $\|z_n-z\|\to 0$ as $n\to\infty$ for some $z\in Y$. Since
$T^+\in C(Y,X)$, it follows that $z\in D(T^+)$ and $T^+z=x$. From
$\delta TT^+z_n=y_n-z_n\stackrel{\|\cdot\|}{\longrightarrow}y-z$,
$T^+z_n \stackrel{\|\cdot\|}{\longrightarrow}x$ and $\delta T\in
C(X,Y)$, we get that $x\in D(\delta T)$ and $\delta Tx=y-z$. Thus
$y\in D(T^+)$, $x=T^+(y-\delta Tx)$ and hence $x=(I_X+T^+\delta
T)^{-1}T^+y=Gy$.
\end{proof}

\begin{remark}{\rm Let $T\in C(X,Y)$ such that $T^+\in C(Y,X)$ exists and let $\delta T\in B(X,Y)$ with $R(\delta T)
\subset D(T^+)$. In this case, we do not need Condition
(\ref{3eqb}). Put $\bar{T}=T+\delta T$. Then $\bar T\in C(X,Y)$ and
$T^+\delta T\in B(X,X)$ by Closed Graph Theorem. Assume that
$I_Y+\delta TT^+\colon D(T^+)\rightarrow D(T^+)$ is bijective and
$R(\bar T)\cap N(T^+)=\{0\}$. Then $G=(I_X+T^+\delta T)^{-1}T^+\in
C(Y,Y)$.

In fact, let $y\in Y$ and $x\in X$ and suppose that there is a
sequence $\{y_n\}$ in $Y$ such that $\|y_n-y\|\to 0$ and
$\|Gy_n-x\|\to 0$ ($n\to\infty$). Then
$$
T^+y_n=(I_X+T^+\delta T)(I_X+T^+\delta
T)^{-1}T^+y_n\stackrel{\|\cdot\|}{\longrightarrow}(I_X+T^+\delta
T)x.
$$
Since $T^+\in C(Y,X)$, we get that $y\in D(T^+)$ and
$T^+y=(I_X+T^+\delta T)x$. Consequently, $Gy=x$. Therefore, $\bar
T^+=T^+(I_Y+\delta TT^+)^{-1}\in C(Y,X)$ by Theorem \ref{DL2} (A).

In addition, if $T^+\in B(Y,X)$,  the results of Theorem \ref{DL2}
(A) are contained in \cite[Chapter 2]{X}. }
\end{remark}

\begin{remark}{\rm
Let $T\in C(X,Y)$ with $T^+\in B(Y,X)$ and let $\delta T\colon
D(\delta T)\rightarrow Y$ be a $T$--bounded linear operator with
$b<1$ and $D(T)\subset D(\delta T)$. Then $\delta T T^+\in B(Y,X)$.
Suppose that $I_Y+\delta T T^+$ is invertible in $B(Y,Y)$ and
$R(T+\delta T)\cap N(T^+)=\{0\}$. Then the bounded linear operator
$T^+(I_Y+\delta T T^+)^{-1}$ is a generalized inverse of $T+\delta
T$ by Theorem \ref{DL2}. This result is Theorem 2.1 of \cite{QHL}.
However, in this case, the equivalence of the conditions (1)---(5)
of Theorem \ref{DL2} (A) is not given in \cite{QHL}.

In addition, if there are constants $a,\,b>0$ such that
$$
a\|T^+\|+b\|TT^+\|<1,\ \|\delta Tx\|\le a\|x\|+b\|Tx\|,\
\forall\,x\in D(T),
$$
then $\|\delta TT^+\|<1$ and $b<1$ for $\|TT^+\|\ge 1$. Thus, $\bar
T$ is a closed operator and $I_Y+\delta TT^+$ is invertible in
$B(Y,Y)$. Therefore, the conditions (1)---(5) of Theorem \ref{DL2}
(A) are equivalent. This result is Theorem 2.1 in \cite{QH}. }
\end{remark}

Finally, combining Proposition \ref{2P1} (2) with Theorem \ref{DL2}
(A), we have
\begin{corollary}
Let $T\in C(X,Y)$ with $R(T)$ closed such that $T^+_{P,\,Q}$ exists.
Let $\delta T\in B(X,Y)$ such that $I_X+T^+_{P,\,Q}\delta T$ is
invertible in $B(X,X)$ and $R(T+\delta T)\cap N(T^+_{P,\,Q})=\{0\}$.
Then $R(T+\delta T)$ is closed and $(T+\delta T)^+_{\bar P,\,\bar
Q}=(I_X+T^+_{P,\,Q}\delta T)^{-1}T^+_{P,\,Q}$, where $\bar
P=(I_X+T^+_{P,\,Q}\delta T)^{-1}P(I_X+T^+_{P,\,Q}\delta T)$ and
$\bar Q=(I_Y+\delta TT^+_{P,\,Q}) T^+_{P,\,Q}(I_Y+\delta
TT^+_{P,\,Q})^{-1}$.
\end{corollary}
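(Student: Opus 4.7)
The plan is to bootstrap Theorem \ref{DL2}(A) to produce the generalized inverse, then separately verify the closedness of $R(T+\delta T)$ and identify the two idempotents. First, since $R(T)$ is closed, Proposition \ref{2P1}(2) gives $T^+_{P,\,Q}\in B(Y,X)$ with $D(T^+_{P,\,Q})=Y$, $TT^+_{P,\,Q}=Q$ and $T^+_{P,\,Q}T=I_X-P$ on $D(T)$. Because $\delta T\in B(X,Y)$, $\delta T$ is trivially $T$--bounded with $b=0<1$, so $\bar T=T+\delta T\in C(X,Y)$. The invertibility of $I_X+T^+_{P,\,Q}\delta T$ in $B(X,X)$ translates via Lemma \ref{YL1} into the bijectivity of $I_Y+\delta TT^+_{P,\,Q}\colon Y\to Y$, and the hypothesis $R(\bar T)\cap N(T^+_{P,\,Q})=\{0\}$ is exactly condition $(1)$ of Theorem \ref{DL2}(A). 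Applying that theorem I obtain the generalized inverse $G=T^+_{P,\,Q}(I_Y+\delta TT^+_{P,\,Q})^{-1}=(I_X+T^+_{P,\,Q}\delta T)^{-1}T^+_{P,\,Q}$ of $\bar T$, with $R(G)=R(T^+_{P,\,Q})$ and $N(G)=N(T^+_{P,\,Q})$.

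The step I expect to require most care is closedness of $R(\bar T)$: since $\bar T$ is only closed and not bounded, closedness of its range must come from a different source. The trick is to observe that the algebraic simplification
\[
\bar TG=(TT^+_{P,\,Q}+\delta TT^+_{P,\,Q})(I_Y+\delta TT^+_{P,\,Q})^{-1}=(Q+\delta TT^+_{P,\,Q})(I_Y+\delta TT^+_{P,\,Q})^{-1}
\]
exhibits $\bar TG$ as a product of bounded operators (the inverse being bounded by the open mapping theorem), hence $\bar TG\in B(Y,Y)$. The generalized inverse identity $\bar TG\bar T=\bar T$ on $D(T)$ then shows $\bar TG$ is a bounded idempotent on $Y$ with range precisely $R(\bar T)$, and ranges of bounded idempotents on Banach spaces are closed.

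It remains to identify $\bar P=I_X-G\bar T$ and $\bar Q=\bar TG$ with the formulas in the statement. A direct computation yields
\[
G\bar T=(I_X+T^+_{P,\,Q}\delta T)^{-1}(T^+_{P,\,Q}T+T^+_{P,\,Q}\delta T)=I_X-(I_X+T^+_{P,\,Q}\delta T)^{-1}P,
\]
so $\bar P=(I_X+T^+_{P,\,Q}\delta T)^{-1}P$. To put this into the symmetric conjugate form of the statement I would use $R(T^+_{P,\,Q})\subset N(T)^c=N(P)$, which gives $PT^+_{P,\,Q}=0$ and hence $P(I_X+T^+_{P,\,Q}\delta T)=P$, yielding $\bar P=(I_X+T^+_{P,\,Q}\delta T)^{-1}P(I_X+T^+_{P,\,Q}\delta T)$. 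Analogously, the identity $T^+_{P,\,Q}Q=T^+_{P,\,Q}TT^+_{P,\,Q}=T^+_{P,\,Q}$ gives $(I_Y+\delta TT^+_{P,\,Q})Q=Q+\delta TT^+_{P,\,Q}$, and right--multiplication by $(I_Y+\delta TT^+_{P,\,Q})^{-1}$ produces the asserted formula for $\bar Q$. Since $\bar P$ and $\bar Q$ are then idempotents with $R(\bar P)=N(\bar T)$ (by Lemma \ref{YL2}) and $R(\bar Q)=R(\bar T)$, the uniqueness part of Proposition \ref{2P1}(2) applied to $\bar T$ lets me conclude $G=(T+\delta T)^+_{\bar P,\,\bar Q}$.
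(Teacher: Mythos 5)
Your proof is correct and follows exactly the route the paper indicates (it states the corollary with no written proof beyond ``combining Proposition \ref{2P1} (2) with Theorem \ref{DL2} (A)''), and you supply precisely the missing details: boundedness of $T^+_{P,\,Q}$ from closedness of $R(T)$, closedness of $R(T+\delta T)$ via the bounded idempotent $\bar T G$, and the identification of $\bar P$ and $\bar Q$. One remark: the paper's printed formula for $\bar Q$ does not typecheck ($T^+_{P,\,Q}$ in the middle factor should be $TT^+_{P,\,Q}=Q$), and your computation correctly produces the intended operator $(I_Y+\delta TT^+_{P,\,Q})\,Q\,(I_Y+\delta TT^+_{P,\,Q})^{-1}$.
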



\begin{thebibliography}{99}

\bibitem{BG}A. Ben-Israel and T.N.E. Greville, Generalized inverse: Theory and Applications (2ed), Springer--Verlag,
New York, 2003.

\bibitem{CX1} G. Chen and Y. Xue, Perturbation analysis for the operator equation $Tx=b$ in Banach spaces,
               J. Math. Anal. Appl., {\bf 212} (1997), 107--125.
\bibitem{CWX} G. Chen, M. Wei and Y. Xue, Perturbation analysis of the least square solution in Hilbert spaces,
                           Linear Algebra Appl. {\bf 244} (1996), 69--80.

\bibitem{CYX} G. Chen, Y. Wei and Y. Xue, The generalized condition numbers of bounded linear operators in Banach spaces,
J. Aust. Math. Soc., {\bf 76} (2004), 281--290.

\bibitem{CX2}G. Chen and Y. Xue, The expression of generalized inverse of the perturbed operators under type I
               perturbation in Hilbert spaces, Linear Algebra Appl. {\bf 285} (1998), 1--6.

\bibitem{DJ} J. Ding, On the expression of generalized inverses of perturbed bounded linear operators,
Missouri J. Math. Sci., {\bf 15} (2003), 40--47.

\bibitem{QHL} Q. Huang, On perturbations for oblique projection generalized inverses of closed linear operators in
             Banach spaces, Linear Algebra Appl., {\bf 434} (2011), 2468--2474.

\bibitem{QH} Q. Huang and W. Zhai, Perturbation and expressions for generalized inverses in Banach spaces and
Moore--penrose inverses in Hilbert spaces of closed operators,
Linear Algebra Appl., {\bf 435} (2011), 117--127.

\bibitem{TK} T. Kato, Perturbation Theory for Linear Operators, Springer--Verlag, New York, 1984.

\bibitem{Na} M.Z. Nashed (Ed.), Generalized inverse and Applications ,Academic Press, New York, 1976.
\bibitem{WZ} Y. Wang and H. Zhang, Perturbation analysis for oblique projection generalized inverses of closed operators
            in Banach spaces, Linear Algebra Appl. {\bf 426} (2007), 1--11.
\bibitem{XC}Y. Xue and G. Chen, Some equivalent conditions of stable perturbation of operators in Hilbert spaces,
                         Applied Math. Comput. {\bf 147} (2004), 765--772
\bibitem{YFX} Y. Xue, Stable perturbation in Banach spaces, J. Aust. Math. Soc., {\bf 83} (2007), 1--14.

\bibitem{X} Y. Xue, Stable Perturbations of Operators and Related Topics, World Scientific, 2012.

\end{thebibliography}
\end{document}